\newtheorem{proposition}{Proposition}[section]
\newtheorem{remark}[proposition]{Remark}
\newtheorem{theorem}[proposition]{Theorem}
\newtheorem{assumption}[proposition]{Assumption}
\newtheorem{example}[proposition]{Example}
\newcommand{\E}{\mathbb{E}}
\renewcommand{\P}{\mathbb{P}}
\newcommand{\R}{\mathbb{R}}
\newcommand{\1}{\mathbf{1}}
\definecolor{gray97}{gray}{.97}
\definecolor{gray75}{gray}{.75}
\definecolor{gray45}{gray}{.45}
\definecolor{gray35}{gray}{.35}
\def\x{\times}
\def\Lc{{\cal L}}
\def\vp{\varphi}
\def\E{{\mathbb E}}
\def\eps{\varepsilon}
\def\Lc{{\cal L}}
\title{Monte-Carlo methods for the pricing of American options: a semilinear BSDE point of view}
\author{Bruno Bouchard\footnote{{Universit\'e Paris-Dauphine, PSL Research University, CNRS, CEREMADE, Paris.}  {bouchard@ceremade.dauphine.fr}. Research of B.~Bouchard partially supported by ANR CAESARS (ANR-15-CE05-
0024). } \and Ki Wai Chau\footnote{Centrum Wiskunde \& Informatica. K.W.Chau@cwi.nl} \and  Arij Manai\footnote{Institut du Risque et de l'Assurance du Mans, Le Mans universit\'{e}. arijmanai@gmail.com} \and  Ahmed Sid-Ali\footnote{Universit\'{e} Laval, D\'{e}partement de math\'{e}matiques et de statistique, Qu\'{e}bec, Canada. ahmed.sid-ali.1@ulaval.ca}
}
\begin{document}
\maketitle

\begin{abstract} 
We extend the viscosity solution characterization proved in \cite{benth2003semilinear} for call/put American option prices to  the case of a  general payoff function in a multi-dimensional setting: the price satisfies a semilinear reaction/diffusion type equation. Based on this, we propose  two  new numerical schemes inspired by  the branching processes based algorithm of \cite{bouchard2016numerical}. Our numerical experiments show that approximating the discontinuous driver of the associated reaction/diffusion PDE  by local polynomials is not efficient,  while a simple {randomization} procedure provides very good results.
\end{abstract}

\vspace{5mm}

\noindent {\bf Keywords~:} American options,  Viscosity solution, Semilinear Black and Scholes partial differential equation, Branching method, BSDE.

\vspace{5mm}

\pagestyle{myheadings}
\thispagestyle{plain}
 
 \section{Introduction} 

An American option is a financial contract which can be exercised by its holder at any time until a given future date, called maturity. 
When it is exercised, the holder receives a payoff that depends on the value of the underlying assets. 
 
Putting this problem in a mathematical context, let us first consider the case of a single stock (non-dividend paying) market under the famous Black and Scholes setting,   \cite{black1973pricing}.
Namely, let $(\Omega, \mathcal{F}, ({\cal F}_{t})_{t\ge 0},\mathbb{P})$ be a filtered probability space carrying a standard one dimensional Brownian motion $W$ and let us model the stock price process $X$ as 
\begin{equation*}
X_{s}=x\exp\big((r-\frac{\sigma^{2}}{2})(s-t)+\sigma (W_{s}-W_{t})\big),\;s\ge t,
\end{equation*}
under the risk natural probability. Here, $x>0$ is the stock price at time $t$,  $r>0$ is the risk-free interest rate and $\sigma >0$ is the volatility. 
Then, the arbitrage free value  at time $t$ of an American option maturing at $T\ge t$ is given by 
\begin{align}
\label{AmOp}
V(t,x)=\sup_{\tau\in {\cal T}_{[t,T]}}\mathbb{E}[e^{-r(\tau-t)}g(X_{\tau})]
\end{align}
where ${\cal T}_{[t,T]}$ is the collection of $[t,T]$-valued stopping times,  and  $g$ is the payoff function, say continuous, see e.g.~\cite{bouchard2016fundamentals} and the references therein.
Typical examples are  
$$
g(x')=
\begin{cases} (x'-K)^+,\,\,\,\,\, \mbox{for a call option} \\
(K-x')^+,\,\,\,\,\,\,\, \mbox{for a put option,}
\end{cases}
$$ 
where $K>0$ denotes the strike price.
 
 By construction, $V(\cdot, X)\ge g(X)$, and the option should be exercised only when $V(\cdot, X) {=} g(X)$. This leads to define the following two regions:
\begin{itemize}
\item the continuation region: 
\begin{equation*}
\textit{C}=\{(t,x) \in [0,T) \times (0,\infty): V(t,x)>g(x)\}
\end{equation*}
\item the stopping (or the exercise) region:
\begin{equation*}
\textit{S}=\{(t,x) \in [0,T) \times (0,\infty):\,V(t,x)=g(x)\}. 
\end{equation*} 
\end{itemize}

These are the basics of the  common formulation of the American option price as a free boundary problem, which already appears in McKean   \cite{mckean1965free}: $V$ solves a heat-equation type linear parabolic problem on $C$ and equals $g$ on $S$, with the constraint of being always greater than $g$. 
Another formulation is based on the quasi-variational approach of Bensoussan and Lions   \cite{bensoussan2011applications}:  the  price solves (at least in the viscosity solution sense) the quasi-variational partial differential equation
\begin{equation*}
\begin{cases} \min\left(r \varphi-\mathcal{L}_{BS}\varphi, \varphi-g\right)=0,\,\,\,\,\, \mbox{ on } [0,T)\times (0,\infty)\\
\varphi(T,\cdot)=g,\hspace{2.65cm}\,\,\, \mbox{ on } (0,\infty)
\end{cases}
\end{equation*}
in which  $\mathcal{L}_{BS}$ is the Dynkin operator associated to $X$:
\begin{equation*}
\mathcal{L}_{BS}=\partial_t+rxD+\frac{1}{2}\sigma^2x^2D^{2}
\end{equation*}
where $D$ and $D^{2}$ are the Jacobian and Hessian operators. 
 
In this paper, we  focus on another formulation that can be found in \cite{benth2003semilinear}, see also \cite{benth2004semilinear} and the references therein. The American   option valuation problem can be stated in terms of a semilinear Black and Scholes partial differential equation set on a fixed domain, namely:
\begin{equation}\label{eq: react diff}
\begin{cases} r\varphi- \mathcal{L}_{BS}\varphi=q(\cdot,\varphi),\,\, \mbox{ on } [0,T) \times (0,+\infty)  \\
\varphi(T,\cdot)=g,\hspace{1.5cm} \mbox{ on } (0, \infty)
\end{cases}
\end{equation}
where  $q$ is a nonlinear reaction term defined as 
\begin{align*}
q(x,\varphi(t,x))=c(x)H(g(x)-\varphi(t,x))=\left\{ \begin{array}{rcl}
0 & \mbox{if} & g(x)< \varphi(t,x) \\ 
c(x) & \mbox{if} & g(x)\geq \varphi(t,x), \\
\end{array}\right.
\end{align*}
in which  $c$ is a certain cash flow function, e.g.~$c=rK$ for a put option, and $H$ is the Heaviside function.

Note that this semilinear Black and Scholes equation does not make sense if we consider classical solutions because of  the discontinuity of $y \rightarrow q(x,y)$. 
It has to be considered in the discontinuous viscosity solution sense, see e.g.~Crandall, Ishii and Lions \cite{crandall1992user}. Namely, even if $V$ is continuous, the supersolution property should be stated in terms of the {lower}-semicontinuous envelope of $q$, the other way round for the subsolution property. This means in particular that the super- and subsolution properties are not defined with respect to the same operator. Still, thanks to the very specific monotonicity of $y \rightarrow q(x,y)$, it is proved in \cite{benth2003semilinear} that, within the Black and Scholes model,  the American option price in the unique solution of \eqref{eq: react diff} in the appropriate sense.

In this work, we first extend  the characterization of  \cite{benth2003semilinear} in terms of  \eqref{eq: react diff} to a general payoff function and to a general market model,  see Section 2. 
Then, we suggest two numerical schemes based on this formulation. The general idea consists in (formally)
 identifying the solution $V$ of \eqref{eq: react diff} to the solution $(Y,Z)$ of the backward stochastic differential equation
 $$
 Y=e^{-rT}g(X_{T})+\int_{\cdot}^{T} e^{-rs}q(X_{s},e^{rs}Y_{s})ds-\int_{\cdot}^{T} Z_{s}dW_{s}
 $$
 by $e^{-r\cdot}V(\cdot,X)=Y$. 
In the  first algorithm, we follow the approach of Bouchard et al.~\cite{bouchard2016numerical} and approximate the nonlinear driver $q$ by local polynomials so as to be able to apply an extended version of the pure forward  branching processes based Feynman-Kac representation of the Kolmogorov-Petrovskii-Piskunov equation, see  \cite{henry2012cutting,henry2016branching}. Unfortunately, our numerical experiments show that this algorithm is quite unstable, see Section \ref{sec: loc poly approach}. In the second algorithm, we do not try to approximate $q$ by local polynomials but in place regularize it with a noise by replacing $q(X,e^{r\cdot}Y)$ by $c(X){\bf 1}_{\{g(X)+\epsilon\ge e^{r\cdot}Y\}}$, in which $\epsilon$ is an independent random variable. When the variance of $\epsilon$ vanishes, this provides a converging estimator. For $\epsilon$ given, 
the corresponding $Y$ is estimated by using the approach of Bouchard et al.~\cite{bouchard2016numerical} with (random) polynomial $(t,x,y,y')\mapsto c(x){\bf 1}_{\{g(x)+\epsilon\ge  e^{rt}y'\}}$ and particles that can only  die (without  creating any children).  This algorithm turns out to be very precise, see Section \ref{sec: indic approach}.

\section{Non-linear parabolic equation representation}

From now on, we take $\Omega$ as the space of $\R^{d}$-valued continuous maps on $[0,T]$ starting at $0$, endowed with the Wiener measure $\P$. We let $W$ denote the canonical process and let $({\cal F}_{t})_{t\le T}$ be its completed filtration.  Given $t\in [0,T]$ and $x\in (0,\infty)^{d}$, we consider a financial market with $d$ stocks whose prices process $X^{t,x}$ evolves according to 
\begin{align}\label{eq: sde}
X^{t,x}=x+\int_{t}^{\cdot} rX^{t,x}_{s} ds +\int_{t}^{\cdot}\sigma(s,X^{t,x}_{s}) dW_{s}
\end{align}
in which $r\in \R$ is a constant\footnote{It should be clear that this assumption is  only made for simplicity. Also note that a dividend rate could be added at no cost.}, the risk free interest rate, and $\sigma:  [0,T]\times (0,\infty)^{d}\mapsto \R^{d\times d}$ is a matrix valued-function that is assumed to be continuous and uniformly Lipschitz in its  second component. We also assume that $\bar \sigma: (t',x')\in [0,T]\times (0,\infty)^{d}\mapsto {\rm diag}[x']^{-1}\sigma(t',x')$ is uniformly Lipschitz in its second component and bounded, where $ {\rm diag}[x']$ stands for the diagonal matrix with $i$-th diagonal entry equal to the $i$-th component of $x'$. 
This implies that $X^{t,x}$ takes values in $(0,\infty)^{d}$ whenever $x\in (0,\infty)^{d}$.

  We also assume that $\P$ is the only (equivalent) probability measure under which $e^{-r(\cdot-t)}X^{t,x}$ is a (local) martingale, for $(t,x)\in [0,T]\times(0,\infty)^{d}$. Then, given a continuous payoff function $g: (0,\infty)^{d}\to \R$, with polynomial growth, the price of the American option with payoff $g$ is given by 
 \begin{eqnarray*}
V(t,x)=\sup_{\tau\in {\cal T}_{[t,T]}}\mathbb{E}[e^{-r(\tau-t)}g(X^{t,x}_{\tau})], 
 \end{eqnarray*}
 in which ${\cal T}_{[t,T]}$ is the collection of $[t,T]$-valued stopping times. See \cite{bouchard2016fundamentals}.
 
 \begin{remark}\label{rem: V continuous} The fact that $(t,x)\in [0,T]\x \R^{d}_{+}\mapsto V(t,x)$ is continuous with polynomial growth follows from standard estimates under  the above assumptions. 
 In particular, the set $\{(t,x)\in [0,T]\x \R^{d}_{+}:V(t,x)=g(x)\}$ is closed. 
 \end{remark}

The aim of this section is to prove that $V$ is a viscosity solution of the non-linear parabolic equation 
\begin{equation}\label{eq: non lin pde}
\begin{array}{rl} 
r\vp-\mathcal{L}\vp-q(\cdot,\vp)=0  &\mbox{ on }  [0,T) \times (0,\infty)^{d}  \\
\vp(T,\cdot)=g & \mbox{ on }  (0,+\infty)^{d}, 
\end{array}
\end{equation}
for a suitable reaction function $q$ on $ (0,\infty)^{d}\x \R$. In the above,  $\Lc$ denotes the Dynkin operator associated to \eqref{eq: sde}: 
$$
\Lc \vp(t',x')=\partial_{t} \vp(t',x')+\langle rx' ,D\vp(t',x')\rangle+\frac12{\rm Tr}[\sigma\sigma^{\top}D^{2}\vp](t',x'),
$$
for a smooth function $\vp$. To be more precise, we define the function $q$ by 
\begin{align*}
q(x,y)=\left\{ \begin{array}{rcl}
0 & \mbox{if} & g(x)< y \\ 
c(x) & \mbox{if} & g(x)\geq y \\
\end{array}\right.,\;(x,y)\in  (0,\infty)^{d}\x \R, 
\end{align*}
where $c$ is a measurable map satisfying the following Assumption \ref{ass: Lg le c  defined}.

\begin{assumption}\label{ass: Lg le c  defined} The map $c: (0,\infty)^{d}\mapsto \R_{+}$ is continuous  with polynomial growth. Moreover,   $g$ is a viscosity subsolution  of 
$r \vp -\Lc \vp-c=0$ on ${\{(t,x)\in [0,T)\x (0,\infty)^{d}:V(t,x)=g(x)\}}$. 
\end{assumption}

Before providing examples of such a function $c$, let us make some important observations. 
 \begin{remark}\label{rem : c si payoff C2}
 First,  {$\{(t,x)\in [0,T)\x (0,\infty)^{d}:V(t,x)=g(x)\}\subset \{x\in(0,\infty)^{d}:$ $ g(x)>0\}$} if  $V>0$ on {$[0,T)\x (0,\infty)^{d}$}, which is typically the case in practice {(e.g.~because $g$ is  non-negative and the probability that $g(X)>0$ on $[0,T]$ is positive)}. In particular, if $g$ is $C^{2}$ on $\{g>0\}$ then one can choose $c=[rg-\Lc g]^{+}$ on $\{g>0\}$. Second, if $g$ is convex, then it can not be touched from above by a $C^{2}$ function at a point at which it is not $C^{1}$, {which implies that one can forget some singularity points in the verification of Assumption \ref{ass: Lg le c  defined} above}. {In Section \ref{sec: MC}, we shall suggest Monte-Carlo based numerical methods for the computation of $V$. One can then try to minimize the variance of the estimator over the choice of $c$. However, it seems natural to choose the function $c$ so that $g$ is actually a viscosity solution of $r \vp -\Lc \vp-c=0$ on ${\{(t,x)\in [0,T)\x (0,\infty)^{d}:V(t,x)=g(x)\}}$. In the numerical study of Section \ref{sec: MC}, this choice coincides with the $c$ with the minimal absolute value, which intuitively should correspond to the one minimizing the variance of the Monte-Carlo estimator. We leave the theoretical study of this variance minimization problem to future researches.}
 \end{remark}
 
\begin{example}\label{exam: function c} Let us consider the following examples in which $\bar \sigma$ is a constant matrix with $i$-th lines $\bar \sigma^{i}$. Fix $K,K_{1},K_{2}>0$ with $K_{1}<K_{2}$.
\begin{itemize}
\item For $d=1$ and a put $g:x\in (0,\infty)\mapsto [K-x]^{+}$,  the function $c$ is given by the constant $rK$. This is one of  the cases treated in \cite{benth2003semilinear}.  `
\item For $d=1$ and a strangle $g:x\in (0,\infty)\mapsto [K_{1}-x]^{+}+[x-K_{2}]^{+}$,   the function $c$ can be any continuous function equal to $rK_{1}$ on $(0,K_{1})$ and equal to $-rK_{2}$ on $(K_{2},\infty)$, {whenever $V>0$}.
\item   For $d=2$ and {a put on arithmetic mean} $g:x\in (0,\infty)^{2}\mapsto[K-\frac{1}{2} \displaystyle\sum_{i=1}^{2}x^{i}]^{+}$, we can take 
$
c=rK.
$
\item  For $d=2$ and {a put on geometric mean} $g:x\in (0,\infty)^{2}\mapsto[K-\sqrt{x^{1}x^{2}}]^{+}$, $c$ can be taken as 
\begin{eqnarray*}
x\in (0,\infty)^{2}\mapsto[r K-\frac{1}{8}(\|\bar \sigma^{1}\|^{2}+\|\bar \sigma^{2}\|^{2}-2\langle \bar \sigma^{1},\bar \sigma^{2}\rangle) \sqrt{x^{1}x^{2}}]^+.
\end{eqnarray*}
\end{itemize}
\end{example}

Since $q$ is discontinuous, we need to consider \eqref{eq: non lin pde} in the sense of viscosity solutions for discontinuous operators. More precisely, let $q_{*}$ and $q^{*}$ denote the lower- and upper-semicontinuous envelopes of $q$. We say that a lower-semicontinuous function $v$ is a viscosity supersolution of  \eqref{eq: non lin pde} if it is a  viscosity supersolution of
\begin{equation*}
\begin{array}{rl} 
r\vp-\mathcal{L}\vp-q_{*}(\cdot,\vp)=0  &\mbox{ on }  [0,T) \times (0,\infty)^{d}  \\
\vp(T,\cdot)=g & \mbox{ on }  (0,+\infty)^{d}.
\end{array}
\end{equation*}
Similarly, we say that a upper-semicontinuous function $v$ is a viscosity subsolution of  \eqref{eq: non lin pde} if it is a  viscosity subsolution of
\begin{equation*} 
\begin{array}{rl} 
r\vp-\mathcal{L}\vp-q^{*}(\cdot,\vp)=0  &\mbox{ on }  [0,T) \times (0,\infty)^{d}  \\
\vp(T,\cdot)=g & \mbox{ on }  (0,+\infty)^{d}.
\end{array}
\end{equation*}
We say that a continuous function is  a viscosity  solution of  \eqref{eq: non lin pde} if it is both a   viscosity super- and subsolution
\vspace{2mm}

Then, we have the following characterization of the American option price, which extends the result of  \cite{benth2003semilinear} to our context. Recall Remark \ref{rem: V continuous}

\begin{theorem}\label{thm: pde reaction diffusion}
Let $c$ be as in Assumption \ref{ass: Lg le c  defined}. Then, $V$ is a viscosity solution of  \eqref{eq: non lin pde}. It has a polynomial growth.
\end{theorem}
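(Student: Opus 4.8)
The plan is to reduce the statement to the classical characterization of $V$ as the unique viscosity solution of the obstacle problem (variational inequality) $\min\{r\vp-\mathcal{L}\vp,\,\vp-g\}=0$ on $[0,T)\x(0,\infty)^{d}$, with $\vp(T,\cdot)=g$, which holds under the standing assumptions (continuity and polynomial growth of $V$ from Remark~\ref{rem: V continuous}, Lipschitz coefficients of \eqref{eq: sde}), see e.g.~\cite{bouchard2016fundamentals}. The first concrete step is to compute the semicontinuous envelopes of $q$. Since $c\ge 0$ is continuous and $g$ is continuous, the set $\{g\ge y\}$ is closed and one checks directly that $q$ is itself upper-semicontinuous, so that $q^{*}(x,y)=c(x)\mathbf{1}_{\{g(x)\ge y\}}=q(x,y)$, whereas the lower envelope opens up the contact level, $q_{*}(x,y)=c(x)\mathbf{1}_{\{g(x)> y\}}$. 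I will also use throughout that $V\ge g$ and $V(T,\cdot)=g$, both immediate from the optimal stopping formulation; the latter provides the terminal condition in \eqref{eq: non lin pde}, and the polynomial growth of $V$ is recorded in Remark~\ref{rem: V continuous}.

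For the supersolution property the discontinuity disappears for free. Because $V\ge g$ everywhere, one always has $g(x)\le V(t,x)$, hence $q_{*}(x,V(t,x))=0$ at every point $(t,x)$ of the graph of $V$. Thus, for a smooth test function touching $V$ from below at an interior point, the required inequality $r\vp-\mathcal{L}\vp-q_{*}(\cdot,\vp)\ge0$ reduces to $r\vp-\mathcal{L}\vp\ge0$, which is exactly one half of the supersolution property of the variational inequality. No case distinction is needed here.

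The subsolution property is where the structure of $c$ enters. Let $\phi$ be smooth and touch $V$ from above at an interior point $(t_{0},x_{0})$. If $V(t_{0},x_{0})>g(x_{0})$, the point lies in the continuation region, $q^{*}(x_{0},V(t_{0},x_{0}))=0$, and the variational inequality forces $r\phi-\mathcal{L}\phi\le0$ there, since $V-g>0$ rules out the obstacle term; this gives the claim. If instead $V(t_{0},x_{0})=g(x_{0})$, then $q^{*}(x_{0},V(t_{0},x_{0}))=c(x_{0})$, and the key observation is that $\phi$ must then also touch $g$ from above: from $\phi\ge V\ge g$ near $(t_{0},x_{0})$ together with $\phi(t_{0},x_{0})=g(x_{0})$, the function $(t,x)\mapsto g(x)-\phi(t,x)$ attains a local maximum equal to $0$ at $(t_{0},x_{0})$, a point of the contact set $\{V=g\}$. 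Assumption~\ref{ass: Lg le c  defined} then yields $r\phi(t_{0},x_{0})-\mathcal{L}\phi(t_{0},x_{0})-c(x_{0})\le0$, which is precisely $r\phi-\mathcal{L}\phi-q^{*}(\cdot,\vp)\le0$ at $(t_{0},x_{0})$.

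I expect the main obstacle to be this last case, the subsolution inequality on the contact set $\{V=g\}$: it is the only place where the discontinuity of $q$ is active and where the precise calibration of $c$ in Assumption~\ref{ass: Lg le c  defined} is used, through the passage from a test function touching $V$ from above to a test function touching $g$ from above. Everything else is bookkeeping: assembling the super- and subsolution properties into the claimed viscosity solution property for the discontinuous operator, verifying the terminal condition $V(T,\cdot)=g$, and recording the polynomial growth from Remark~\ref{rem: V continuous}.
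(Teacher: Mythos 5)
Your proof is correct and matches the paper's argument in all essentials: the supersolution side trivializes because $V\ge g$ forces $q_{*}(\cdot,V)=0$, and on the subsolution side you split into the continuation region (where $q^{*}(\cdot,V)=0$) and the contact set $\{V=g\}$, where the key step is identical to the paper's, namely that a test function touching $V$ from above at a contact point also touches $g$ from above there, so that Assumption \ref{ass: Lg le c  defined} yields $r\vp-\Lc\vp-c\le 0$. The only cosmetic difference is that in the continuation region you import the inequality $r\vp-\Lc\vp\le 0$ from the known variational-inequality characterization of $V$, whereas the paper derives it inline from the dynamic programming principle of \cite{bouchard2011weak}; the substance is the same.
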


\begin{proof} {We just follow} the arguments of  \cite{benth2003semilinear}. 
\\
a. First note that $V\ge g$, so that\footnote{{Note that this is an important consequence of using $q_{*}$ instead of $q$.}} $q_{*}(\cdot,V)=0$. Hence, the supersolution property is equivalent to being a supersolution of 
\begin{equation*} 
r\vp-\mathcal{L}\vp=0  \;\mbox{ on }  [0,T) \times (0,\infty)^{d} \;\mbox{ and }\;  
\vp(T,\cdot)=g \; \mbox{ on }  (0,+\infty)^{d},
\end{equation*}
which is standard. 

b. Fix $(t,x)\in [0,T]\x (0,\infty)^{d}$ and a smooth function $\vp$ such that $(t,x)$ achieves a maximum on $ [0,T]\x (0,\infty)^{d}$ of $V-\vp$ and $(V-\vp)(t,x)=0$. If $t=T$, then the required result holds by {definition}. We now assume that $t<T$. If $(t,x)$ belongs to the open set $C:=\{V>g\}$, recall Remark \ref{rem: V continuous}, then one can find a $[t,T]$-valued stopping time $\tau$ such that $(\cdot \wedge \tau,X^{t,x}_{\cdot \wedge \tau})\in C$, and it follows from the dynamic programming principle, see e.g.~\cite{bouchard2011weak},  that 
$$  
\vp(t,x)\le  \E\left[e^{-r (\tau_{\eps}-t)}\vp(\tau_{\eps},X_{\tau_{\eps}})\right] 
$$
in which $\tau_{\eps}:=\tau \wedge (t+\eps)$ for $\eps>0$. Then,     standard arguments lead to  
$$
0\ge r\vp(t,x)-\Lc \vp(t,x)=r\vp(t,x)-\Lc \vp(t,x)-q^{*}(x,\vp(t,x)).
$$
 Let us now assume that $(t,x)\in S:=\{V=g\}$. {In particular, $\vp(t,x)=V(t,x)=g(x)$ and therefore $q^{*}(x,\vp(t,x))=q^{*}(x,V(t,x))=c(x)$}. Since $V\ge g$, $(t,x)$ is also a maximum of $g-\vp$ and  $\vp$ satisfies 
 $$
 0\ge r\vp(t,x)-\Lc \vp(t,x)-c(x)=r\vp(t,x)-\Lc \vp(t,x)-q^{*}(x,\vp(t,x)),
 $$
 by Assumption \ref{ass: Lg le c  defined}.
\end{proof}

This viscosity solution property can be complemented with a comparison principle as in \cite{benth2003semilinear}.  Combined with Theorem \ref{thm: pde reaction diffusion}, it shows that $V$ is the unique viscosity solution of \eqref{eq: non lin pde} with polynomial growth.

\begin{proposition}\label{prop: comp} Let the conditions of Theorem \ref{thm: pde reaction diffusion} hold. Let $v$ and $w$ be respectively a super- and a subsolution of  \eqref{eq: non lin pde}, with polynomial growth. Then, $v\ge w$ on $[0,T]\x (0,\infty)^{d}$.
\end{proposition}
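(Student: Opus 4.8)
The statement is a comparison principle for the discontinuous semilinear operator, and the plan is to adapt the classical doubling-of-variables argument for parabolic viscosity solutions, the only genuinely new ingredient being the treatment of the discontinuity of $q$, which is made possible by its monotonicity in the second variable. First I would record the exact form of the envelopes, $q^{*}(x,y)=c(x)\1_{\{y\le g(x)\}}$ and $q_{*}(x,y)=c(x)\1_{\{y< g(x)\}}$, and isolate the key monotonicity lemma: for a fixed $x$ and any $a>b$ one has $q^{*}(x,a)\le q_{*}(x,b)$. Indeed, if $a>g(x)$ the left-hand side is $0\le q_{*}(x,b)$ since $c\ge 0$, while if $a\le g(x)$ then $b<a\le g(x)$ forces both sides to equal $c(x)$. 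This inequality is what replaces the usual Lipschitz bound on the reaction term and encodes the "correct" monotonicity of $y\mapsto q(x,y)$.

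Next, assume for contradiction that $M:=\sup_{[0,T]\x(0,\infty)^{d}}(w-v)>0$. Since $r$ may fail to be positive, I would first reduce to a proper operator by the substitution $(\hat v,\hat w):=(e^{\lambda \cdot}v,e^{\lambda \cdot}w)$ for some $\lambda>-r$: both solve equations of the same type with zeroth-order coefficient $r+\lambda>0$ and a reaction term still non-increasing in the second variable and still satisfying the envelope inequality above (the positive factor $e^{\lambda t}$ and the continuity of $c$ preserve all sign relations). I would then localise. Because $v,w$ have polynomial growth and the domain is the unbounded set $(0,\infty)^{d}$ having the coordinate hyperplanes as boundary, I would pass to logarithmic coordinates $z=\log x$, under which the boundedness and Lipschitz continuity of $\bar\sigma$ make the operator have bounded coefficients on all of $\R^{d}$ and the boundary disappears, and add a small penalisation of the growth (compatible with the polynomial growth of $v,w$) so that the relevant suprema are attained on a compact set. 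The terminal condition $v(T,\cdot)\ge g\ge w(T,\cdot)$ guarantees, via a standard $\eta(T-t)$ term, that the maximiser stays away from $t=T$.

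With this in place I would double the space variable, maximising $w(t,x)-v(t,y)-|x-y|^{2}/(2\eps)$ together with the penalisations, apply the Crandall--Ishii lemma to obtain matrices controlling the second-order terms, and insert the resulting jets into the sub- and supersolution inequalities. Subtracting them, the second-order contributions are bounded by a term that vanishes as $\eps\to0$ (by the Lipschitz continuity of $\sigma$), the first-order drift contributes $r|x_{\eps}-y_{\eps}|^{2}/\eps\to0$, and one is left, at the maximiser $(t_{\eps},x_{\eps},y_{\eps})$, with $(r+\lambda)\big(w(t_{\eps},x_{\eps})-v(t_{\eps},y_{\eps})\big)\le q^{*}(x_{\eps},w(t_{\eps},x_{\eps}))-q_{*}(y_{\eps},v(t_{\eps},y_{\eps}))+o(1)$, where the left-hand side tends to $(r+\lambda)M>0$ as $\eps\to0$.

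The main obstacle, and the heart of the proof, is to show that the right-hand side is at most $o(1)$ despite $q$ being evaluated at the two distinct points $x_{\eps}\ne y_{\eps}$, so that the monotonicity lemma cannot be applied verbatim. Here I would use that, at a positive maximum, $w_{\eps}:=w(t_{\eps},x_{\eps})$ and $v_{\eps}:=v(t_{\eps},y_{\eps})$ satisfy $w_{\eps}-v_{\eps}\ge M/2>0$ for $\eps$ small, together with $|x_{\eps}-y_{\eps}|\to0$. If $w_{\eps}>g(x_{\eps})$ then $q^{*}(x_{\eps},w_{\eps})=0$ and the right-hand side is $\le 0$; otherwise $v_{\eps}\le w_{\eps}-M/2\le g(x_{\eps})-M/2$, and since $g$ is uniformly continuous on the localisation region with modulus $\omega_{g}$, one has $g(x_{\eps})\le g(y_{\eps})+\omega_{g}(|x_{\eps}-y_{\eps}|)$, so that $v_{\eps}<g(y_{\eps})$ once $\omega_{g}(|x_{\eps}-y_{\eps}|)<M/2$; then $q_{*}(y_{\eps},v_{\eps})=c(y_{\eps})$ and $q^{*}(x_{\eps},w_{\eps})=c(x_{\eps})$, whence the right-hand side equals $c(x_{\eps})-c(y_{\eps})\to0$ by continuity of $c$. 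In every case the right-hand side is $\le o(1)$, contradicting $(r+\lambda)M>0$. Thus the strict gap between $w_{\eps}$ and $v_{\eps}$ is exactly what converts the uncontrolled jump of the indicator into the continuous increment $c(x_{\eps})-c(y_{\eps})$; isolating and exploiting this gap is the step I expect to require the most care, the remaining localisation and Ishii-lemma manipulations being routine.
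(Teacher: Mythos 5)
Your proposal is correct and follows essentially the same route as the paper: a doubling-of-variables argument with Ishii's lemma whose crux is exactly your case analysis, namely that the strict gap $w-v\ge \delta>0$ at the penalized maximum, combined with the continuity of $g$ and $c$, turns the difference $q^{*}-q_{*}$ evaluated at the two nearby points into either a nonpositive quantity or the vanishing increment $c(x_{\eps})-c(y_{\eps})$. The only differences are routine localisation choices (you use logarithmic coordinates and a single penalisation parameter, while the paper keeps the variables in $(0,\infty)^{d}$ with barrier terms $\eps/\prod_{i}x^{i}$, a growth supersolution $\lambda\psi$, and a double limit in $\eps$ and $n$), which do not change the substance of the argument.
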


\begin{proof}  We adapt the arguments of \cite{benth2003semilinear}. As usual, one can assume without loss of generality that $r>0$, upon replacing $v$ by $(t,x)\mapsto e^{-\rho t}v(t,x)$ and $w$ by $(t,x)\mapsto e^{-\rho t}w(t,x)$ for some $\rho>|r|$. Fix $p\ge 1$ and $C>0$ such that $|v(t,x)|+|w(t,x)|\le C(1+\|x\|^{p})$ for all $(t,x)\in [0,T]\x(0,\infty)^{d}$. Set $\psi(t,x):=e^{-\kappa t}(1+\|x\|^{2p})$ for $(t,x)\in  [0,T]\x (0,\infty)^{d}$, for some $\kappa$ large enough so that $\psi$ is a supersolution of $-\Lc\vp=0$ on $[0,T)\x (0,\infty)^{d}$, which is possible since $\bar \sigma$ is bounded. Set 
$$
\phi^{\eps}_{n}(t,x,y):=w(t,y)-v(t,x)-n\|x-y\|^{2p}-{\lambda}\psi(t,y)-\frac{\eps}{\prod_{i=1}^{d} x^{i}}-\frac{\eps}{\prod_{i=1}^{d} y^{i}}
$$
for $n\ge 1$, $\eps>0$, $(t,x,y)\in  [0,T]\x (0,\infty)^{2d}$, and a given ${\lambda}>0$. Assume that  $\sup_{[0,T]\x(0,\infty)^{2d}}$ $(w$ $-v)>0$. Then one can find $\eps_{\circ},{\lambda}>0$  and $\delta>0$ such that 
\begin{align}\label{eq: Phineps ge delta}
\sup_{[0,T]\x(0,\infty)^{2d}}\phi^{\eps}_{n}\ge \delta,\;\mbox{ for $\eps\in (0,\eps_{\circ})$ and $n\ge 1$.}
\end{align}
Clearly, $\phi^{\eps}_{n}$ admits a maximum point  $(t_{n}^{\eps},x_{n}^{\eps},y_{n}^{\eps})$ on $[0,T{]} \x (0,\infty)^{2d}$. Moreover, it follows from standard arguments that $(t_{n}^{\eps},x_{n}^{\eps},y_{n}^{\eps})$ converges to some $(t_{n},x_{n},y_{n})\in [0,T{]}\x \R_{+}^{d}$ as $\eps\to 0$, possibly along a subsequence, and that 
\begin{align}\label{eq: proof comp conv n eps}
&\lim_{\eps\to 0}( \frac{\eps}{\prod_{i=1}^{d} (x_{n}^{\eps})^{i}}+\frac{\eps}{\prod_{i=1}^{d} (y_{n}^{\eps})^{i}})=0\;\mbox{ , }\; \lim_{n\to \infty}n\|x_{n}-y_{n}\|^{2p}=0,
\\
&\lim_{\eps\to 0}(w(t_{n}^{\eps},y_{n}^{\eps}),v(t_{n}^{\eps},x_{n}^{\eps}))=(w(t_{n}^{},y_{n}^{}),v(t_{n}^{},x_{n}^{}))\label{eq: lim eps in v w}
\\
&\lim_{n\to \infty} y_{n}=\hat y, \mbox{ for some } \hat y \in \R^{d}_{+},\label{eq:xn conv}
\end{align}
possibly along subsequences, see e.g.~\cite[Proof of Theorem 4.5]{bouchard2016fundamentals} and \cite{crandall1992user}. 
Combining  Ishii's Lemma, see e.g.~\cite{crandall1992user},   with the super- and subsolution properties of $v$, $\psi$ and $w$, we obtain 
\begin{align*}
0\ge& r(w(t_{n}^{\eps},y_{n}^{\eps})-v(t_{n}^{\eps},x_{n}^{\eps}))-q^{*}(y_{n}^{\eps},w(t_{n}^{\eps},y_{n}^{\eps}))+q_{*} (x_{n}^{\eps},v(t_{n}^{\eps},x_{n}^{\eps}))\\
&-O (n\|x_{n}^{\eps}-y_{n}^{\eps}\|^{2p})-\eta^{n}_{\eps}
\end{align*}
in which, thanks to the left-hand side of \eqref{eq: proof comp conv n eps}, $\eta^{n}_{\eps}\to 0$ as $\eps\to 0$, for all $n\ge 1$. By the right-hand side of \eqref{eq: proof comp conv n eps}, the discussion just above it, and \eqref{eq: lim eps in v w}, sending $\eps\to 0$ and then $n\to \infty$  leads to 
\begin{align*}
0\ge& \limsup_{n\to \infty }\left\{r(w(t_{n},y_{n})-v(t_{n},x_{n}))-q^{*}(y_{n},w(t_{n},y_{n}))+q_{*} (x_{n},v(t_{n},x_{n}))\right\}
\end{align*}
and therefore  
\begin{align*}
\liminf_{n\to \infty } \{q^{*}(y_{n},w(t_{n},y_{n}))-q_{*} (x_{n},v(t_{n},x_{n}))\}&\ge r\delta
\end{align*}
by  \eqref{eq: Phineps ge delta}. 
Recall that $c$ is non-negative and that $w(t_{n},y_{n})-v(t_{n},x_{n})\ge \delta$ by \eqref{eq: Phineps ge delta}. If, along a subsequence, $g(x_{n})>v(t_{n},x_{n})$ for all $n$, then $q^{*}(y_{n},w(t_{n},y_{n}))-q_{*} (x_{n},v(t_{n},x_{n}))\le c(y_{n})-c(x_{n})$  for all $n$, leading to a contradiction since $c(x_{n})-c(y_{n})\to 0$ as $n\to \infty$ (recall \eqref{eq: proof comp conv n eps} and \eqref{eq:xn conv}) and $r>0$. If, along a subsequence, $g(x_{n})\le v(t_{n},x_{n})$ for all $n$, then $g(y_{n})\le  v(t_{n},x_{n})+\delta/2\le w(t_{n},y_{n})-\delta/2$ for all $n$ large enough and the above liminf is also non-positive. A contradiction too.
\end{proof}

\section{Monte-Carlo estimation}\label{sec: MC}
\def\E{{\mathbb E}}
\def\Lb{{\mathbf L}}
\def\Sb{{\mathbf S}}
\def\Fc{{\cal F}}
The solution of \eqref{eq: non lin pde} is formally related to the solution  $(Y,Z)\in \Sb_{2}\x \Lb_{2}$ of the backward stochastic differential equation
 $$
Y=e^{-rT}g(X_{T})+\int_{\cdot}^{T} e^{-rs}q(X_{s},e^{rs}Y_{s})ds-\int_{\cdot}^{T} Z_{s}dW_{s}
 $$
 by $e^{-r\cdot}V(\cdot,X)=Y$. In the above, $\Sb_{2}$ denotes the space of adapted processes $\xi$ such that $\E[\sup_{[0,T]}\|\xi\|^{2}]<\infty$ and   $\Lb_{2}$ denotes the space of predictable processes $\xi$ such that $\E[\int_{0}^{T}\|\xi_{t}\|^{2}dt]<\infty$. 
 
 \begin{remark} Note that, if $(Y,Z)$ satisfies the above BSDE, then
$$
 Y_{0}=\E[e^{-rT}g(X_{T})+\int_{0}^{T} e^{-rs}q(X_{s},e^{rs}Y_{s})ds].
 $$
 In the case where $c=rg-\Lc g$, on ${\{(t,x)\in [0,T)\x (0,\infty)^{d}: V(t,x)=g(x)\}}$, this corresponds to the early exercise premium formula. Recall Assumption \ref{ass: Lg le c  defined} and see \cite[Section 6]{benth2003semilinear}.
 \end{remark}
 
  In practice the above BSDE is not well-posed because $q$ is not continuous. However, it can be smoothed out for the purpose of numerical approximations. In the following, we write $\E_{s}[\cdot]$ to denote the expectation given $\Fc_{s}$, $s\le T$.
 
 \begin{proposition}\label{prop : stability qn} Let the condition of Theorem \ref{thm: pde reaction diffusion} hold. Let $(q_{n})_{n\ge 1}$ be a sequence of continuous functions on $ (0,\infty)^{d}\x \R$ that are Lipschitz in their last component\footnote{{See below for examples.}}. Assume that $(q_{n})_{n\ge 1}$ is uniformly bounded by a function with polynomial growth in its first component and linear growth in its last component. Assume further that  
 \begin{align}\label{eq: relaxed lim qn}
 \limsup_{\tiny\begin{array}{c}n\to \infty\\ (x',y')\to (x,y)\end{array}}q_{n}(x',y'){\le}q^{*}(x,y)\; \mbox{ and } \liminf_{\tiny\begin{array}{c}n\to \infty\\ (x',y')\to (x,y)\end{array}}q_{n}(x',y'){\ge}q_{*}(x,y)
 \end{align}
 for all $(x,y)\in (0,\infty)^{d}\x \R$. For $(t,x)\in [0,T]\x (0,\infty)^{d}$, let  $(Y^{t,x,n})_{n\ge 1}$ be such that 
 $$
 Y^{t,x,n}_{s}=\E_{s}[e^{-r{T}}g(X^{t,x}_{T})+\int_{s}^{T}e^{-r{u}} q_{n}(X^{t,x}_{u},e^{{r{u}}}Y^{t,x,n}_{u})du] ,
 $$
 for $s\in [t,T]$, 
 and set $V_{n}(t,x):=e^{rt}Y^{t,x,n}_{t}$. Then, $(V_{n})_{n\ge 1}$ converges pointwise to $V$ as $n\to \infty$. 
 \end{proposition}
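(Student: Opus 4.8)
The plan is to combine the nonlinear Feynman--Kac representation of each $V_{n}$ with the method of semi-relaxed limits and then invoke the comparison principle of Proposition~\ref{prop: comp}. First I would record that, for each fixed $n$, the driver $q_{n}$ is continuous and Lipschitz in its last variable, so the BSDE defining $Y^{t,x,n}$ is well-posed by standard theory, and the map $(t,x)\mapsto V_{n}(t,x)=e^{rt}Y^{t,x,n}_{t}$ is a continuous viscosity solution of $r\vp-\Lc\vp-q_{n}(\cdot,\vp)=0$ on $[0,T)\x(0,\infty)^{d}$ with $\vp(T,\cdot)=g$. The uniform bound on $(q_{n})_{n\ge1}$, polynomial in $x$ and linear in $y$, together with standard a priori BSDE estimates (or comparison against an explicit polynomial super/subsolution) yields a polynomial bound on $V_{n}$ that is uniform in $n$; this uniform bound is what will make the relaxed limits below finite and of polynomial growth.

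Next I would introduce the semi-relaxed limits
$$
\bar V(t,x):=\limsup_{\substack{n\to\infty\\ (t',x')\to(t,x)}}V_{n}(t',x'),\qquad \underline V(t,x):=\liminf_{\substack{n\to\infty\\ (t',x')\to(t,x)}}V_{n}(t',x'),
$$
which are respectively upper- and lower-semicontinuous with $\underline V\le\bar V$ by construction. The crucial observation is that the two one-sided conditions in \eqref{eq: relaxed lim qn} are exactly tailored to the passage to the limit in the viscosity inequalities: testing $\bar V$ from above by a smooth $\vp$, the bound $\limsup q_{n}\le q^{*}$ lets me conclude that $\bar V$ is a subsolution of \eqref{eq: non lin pde} in the discontinuous sense defined above (i.e.\ with $q^{*}$); symmetrically, the bound $\liminf q_{n}\ge q_{*}$ shows $\underline V$ is a supersolution (with $q_{*}$). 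The uniform growth established in the first step guarantees these relaxed limits are finite with polynomial growth.

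Since $V$ is itself a viscosity solution of \eqref{eq: non lin pde} with polynomial growth by Theorem~\ref{thm: pde reaction diffusion}, I would then apply Proposition~\ref{prop: comp} twice: $\bar V$ subsolution versus $V$ supersolution gives $\bar V\le V$, and $V$ subsolution versus $\underline V$ supersolution gives $V\le\underline V$. Chaining these with $\underline V\le\bar V$ yields $V\le\underline V\le\bar V\le V$, hence $\bar V=\underline V=V$. The coincidence of the upper and lower relaxed limits with the continuous function $V$ forces $V_{n}\to V$ locally uniformly, in particular pointwise, which is the claim.

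I expect the main obstacle to be the terminal layer: for Proposition~\ref{prop: comp} to apply, $\bar V$ and $\underline V$ must carry the terminal data $g$, i.e.\ $\bar V(T,\cdot)=\underline V(T,\cdot)=g$. Although each $V_{n}$ satisfies $V_{n}(T,\cdot)=g$ exactly, transferring this to the relaxed limits requires a uniform-in-$n$ control of the modulus of continuity in time near $T$ (a barrier argument), which is precisely where the linear-growth assumption on $q_{n}$ and the uniform BSDE estimates are needed. The interior passage to the limit, by contrast, is the routine part once \eqref{eq: relaxed lim qn} is in hand.
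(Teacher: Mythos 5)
Your proposal is correct and follows essentially the same route as the paper's proof: well-posedness of each BSDE and the viscosity-solution property of $V_{n}$, a uniform polynomial growth bound, Barles--Perthame semi-relaxed limits combined with the stability property under \eqref{eq: relaxed lim qn}, and finally the comparison principle of Proposition~\ref{prop: comp} to squeeze $V\le \underline V\le \bar V\le V$. Your closing remark on the terminal layer is a point the paper's proof passes over silently (it simply cites stability and \cite{barles1994solutions}), so flagging the barrier argument there is a useful, but not divergent, addition.
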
 
 
\begin{proof} Each BSDE associated to $q_{n}$ admits a unique solution $(Y^{t,x,n},Z^{t,x,n})\in \Sb_{2}\x \Lb_{2}$, and it is standard to show that $V_{n}$ is a continuous viscosity solution of 
 $$
 r\vp -\Lc\vp-q_{n}(\cdot,\vp)=0 \mbox{ on } [0,T)\x (0,\infty)^{d} \mbox{ and } \vp(T,\cdot)=g  \mbox{ on } (0,\infty)^{d}. 
 $$
 Moreover, $(V_{n})_{n\ge 1}$ has (uniformly) polynomial growth, thanks to the uniform polynomial growth assumption on $(q_{n})_{n\ge 1}$. See e.g.~\cite{pardoux1998backward}. 
 By stability and \eqref{eq: relaxed lim qn}, see e.g.~\cite{barles1994solutions}, it follows that the relaxed limsup $V^{*}$ and liminf $V_{*}$ of $(V_{n})_{n\ge 1}$ are respectively sub- and super-solutions of 
 \eqref{eq: non lin pde}. By Proposition \ref{prop: comp}, $V^{*}\le V\le V_{*}$ and therefore equality holds among the three functions. 
\end{proof}

Therefore, up to a smoothing procedure, we are back to essentially solving a BSDE. In the next two sections, we propose two approaches. The first one consists in smoothing $q$ into a a smooth function $q_{n}$ to which we apply the local polynomial approximation procedure of \cite{bouchard2016numerical}. This allows us to use a pure forward Monte-Carlo method for the estimation of $V_{n}$, based on branching processes. In the second approach, we only add an independent noise in the definition of $q$, which also has the effect of smoothing it out, and then use a very simple version of  the algorithm in \cite{bouchard2016numerical}. As our numerical experiments show, the first approach is quite unstable while the second one is very efficient.

\subsection{Local polynomial approximation and branching processes}\label{sec: loc poly approach}
 
Given Proposition \ref{prop : stability qn}, it is tempting to estimate the American option price by using the recently developed Monte-Carlo method for BSDEs, see \cite{bouchard2012monte} and the references therein.  Here, we propose to use the forward approach suggested by \cite{bouchard2016numerical}, which is based on the use of branching processes coupled (in theory) with Picard iterations. 

The first step consists in approximating the Heaviside function $H:z\mapsto \1_{\{z\ge 0\}}$ by a sequence of  Lipschitz functions $(H_{n})_{n\ge 1}$ and to define $q_{n}$ by 
$$
q_{n}:(x,y)\mapsto c(x)H_{n}(g(x)-y). 
$$  
Then, $q_{n}$ is approximated by a map ${(x,y)}\mapsto \bar q_{n}(x,y,y)$ of local polynomial form: 
\begin{equation}\label{eq: local poly}
\bar q_{n}:(x,y,y') \rightarrow \displaystyle\sum_{j=1}^{j_{0}}\sum_{l=0}^{l_{0}}a_{j,l}(x)y^{l}\phi_{j}(y')  
\end{equation}
where $(a_{j,l}, \phi_{j})_{l\leq l_{0},j\leq j_{0}}$ is a family of continuous and bounded maps satisfying 
\begin{equation*}
|a_{j,l}|\leq C_{l_{0}},\,\,\,|\phi_{j}(y'_{1})-\phi_{j}(y'_{2})|\leq L_{\phi}|y'_{1}-y'_{2}|\; \mbox{ and } |\phi_{j}|\leq 1,
\end{equation*}
for all $y'_{1}$,$y'_{2} \in \mathbb{R}$, $j\leq j_{0}$ and  $l \leq  l_{0}$, for some constants $C_{l_{0}}$, $L_{\phi}\geq 0$.
The elements of  $(a_{j,l}(x))_{l\le l_{0}}$ should be interpreted as the coefficients of a polynomial approximation of ${q_{n}}$ on a subset $A_{j}$, in which $(A_{j})_{j\le j_{\circ}}$   forms a partition of $\mathbb{R}$ and the $\phi_{j}$'s as smoothing kernels that allow one to pass in a Lipschitz way from one part of the partition to another one, see \cite{bouchard2016numerical}. 

  Then, one can consider the sequence of BSDEs
\begin{align*}
\bar  Y^{t,x,n,k+1}_{s}=&\E_{s}[e^{-rT}g(X^{t,x}_{T})]\\
&+\E[\int_{s}^{T}e^{-ru} \bar q_{n}(X^{t,x}_{u},e^{ru}\bar Y^{t,x,n,k+1}_{u},e^{ru}\bar Y^{t,x,n,k}_{u})du],\;k\ge 1,
\end{align*}
with $\bar  Y^{t,x,n,1}$ given as an  initial prior (e.g.~$e^{r\cdot}g(X^{t,x})$). Given $\bar Y^{t,x,n,k}$, $\bar Y^{t,x,n,k+1}$ solves a BSDE with polynomial driver that can be estimated by using branching processes as in the Feynman-Kac representation of the Kolmogorov-Petrovskii-Piskunov equation, see  \cite{henry2012cutting,henry2016branching}. We refer to \cite{bouchard2016numerical} for more details. 
\vspace{2mm}

In practice, we use the Method A of \cite[Section 3]{bouchard2016numerical}. We perform a numerical experiment in dimension $1$, with a    time horizon of one year, and a risk-free interest rate set at $6\%$. We consider the Black and Scholes model with one  single stock whose volatility is $40\%$. We price a put option which strike is {$K:=40$}. At the money, the American option price is around $5.30$, while the European option is worth $5.05$. In view of  Example \ref{exam: function c}, we take $c=rK$\footnote{{Note that, for this payoff, the constant $rK$ is the function with the smallest absolute value among the functions $c$ satisfying the requirements of Assumption \ref{ass: Lg le c  defined}}.}. We first smooth the driver with a centered Gaussian density with variance $\kappa^{-2}$, so as to replace it by $0.5r K e^{-r t}\mbox{erfc}(\kappa*{(y-e^{-rt}g(x))})$ with $\kappa=10$. See Figure \ref{figure_driver}. Then, we apply a  quadratic  spline approximation.  
{ In actual computation, as it is impossible to apply spline approximation on the whole half real line, we limited the domain of y for the driver function to $[0, 40(1 - e^{-0.06})]$.
We partition this bounded domain into 20 intervals with equal-distant points and define a piecewise polynomial on this domain by assigning a quadric polynomial to each intervals.
Finally, we match the values and derivatives of our piecewise polynomial at each point of the grid to the original function (except at the right-end where the derivative is assumed to be zero).
The truncation of domain will not alter the computational result as our limited domain includes the maximum payoff for the put option.
The resulting approximation is indistinguishable from the original function displayed on Figure \ref{figure_driver}.}
\\
We also partition $[0,T]$ in $10$ periods. As for the grid in the $x$-component, we use a $25$-point uniform space-grid on the interval $[e^{-20}, 80]$.
\\
We estimate the early exercise value {by first using   1.000 Monte-Carlo paths}. As can be seen on Figure \ref{figure_montecarlo2}, the results are not good {and this does not improve much with a higher number of simulations}. The algorithm {turns out to be} quite unstable and not accurate.  It remains pretty unstable even for a large number of simulated paths. This is not so surprising.
 Indeed, as explained in \cite{bouchard2016numerical}, their approach is dedicated to situations where the driver functions is rather smooth, so that the local polynomial's coefficients $(a_{j,l})_{j,l}$ are small, and the supports of the $\phi_{j}$'s are large and do not intersect too much.
Since we are approximating the Heaviside function, none of these requirements are met.
\begin{figure}
\begin{center}
\includegraphics[height=0.2\textwidth, width = 0.5\textwidth]{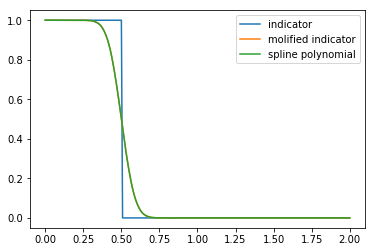}
\caption{Approximation of  {$y\mapsto  \1_{\{y\le 0.5\}}$}.}
\end{center}
\label{figure_driver}
\end{figure}
\begin{figure}
\begin{center}
\includegraphics[width = 0.5\textwidth]{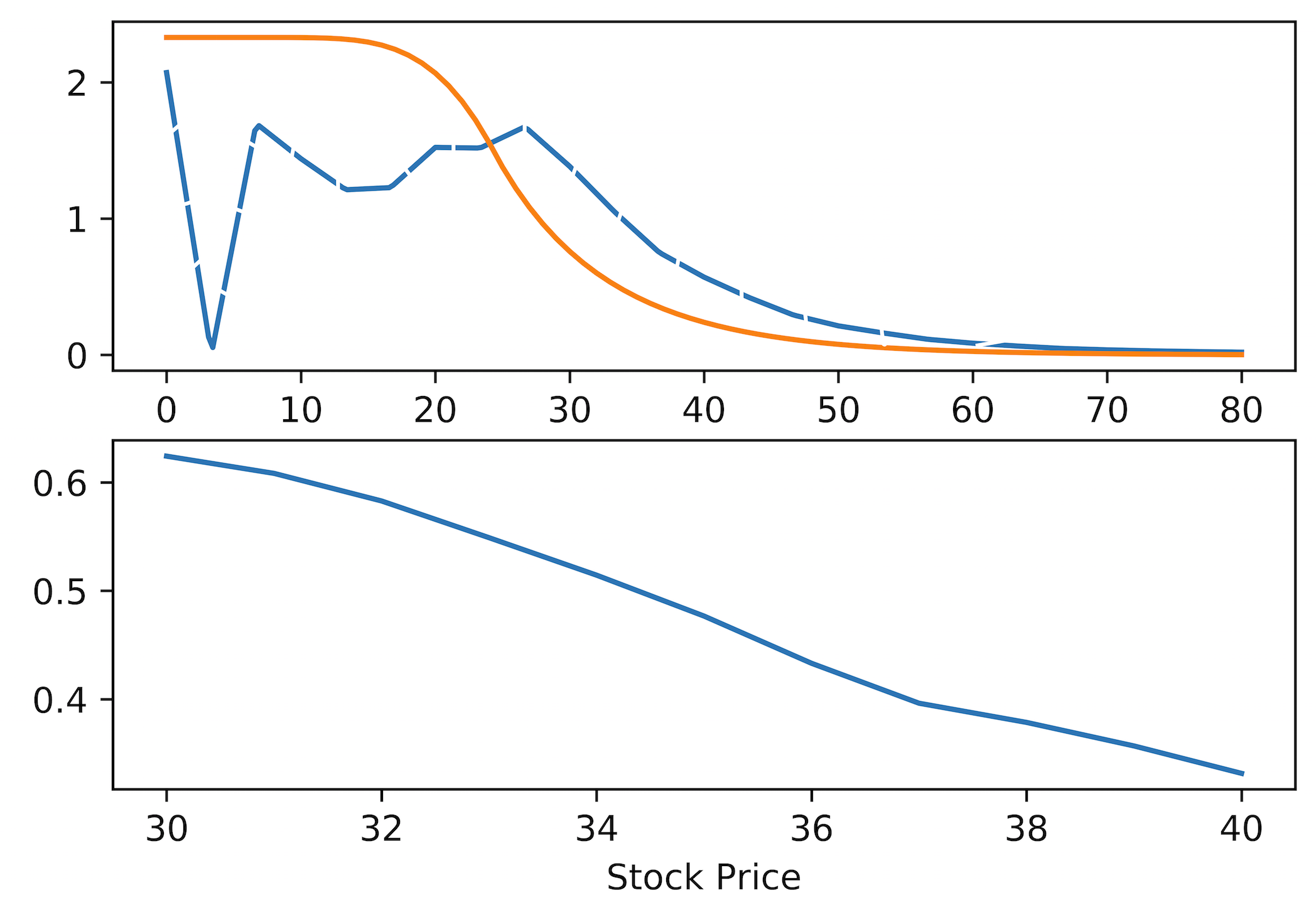}
\end{center}
\caption{Branching with local polynomial approximation. Upper graph: Early exercise premium (plain line obtained by a pde solver, dashed line estimated). Lower graph: Error on the early exercise premium estimation.}
\label{figure_montecarlo2}
\end{figure}

\newpage

 \subsection{Driver randomization}\label{sec: indic approach}
 
 In this second approach, we enlarge the state space so as to introduce an independent   integrable random variable  $\epsilon$ with density $f$ such that $z\mapsto (1+|z|)f'(z)$ is integrable. 
 We assume that the interior of the support of $f$ is of the form $(m_{\epsilon},M_{\epsilon})$ with $-\infty\le m_{\epsilon}<M_{\epsilon}\le \infty$. 
 Then, we define the sequence of random maps
 $$
 \tilde q_{n}(x,y):=c(x)\1_{\{g(x)+\frac{\epsilon}{n}\ge   y \}}
 $$ 
 as well as 
 \begin{align*}
 q_{n}(x,y):=&c(x)n\left\{ [g(x)+M_{\epsilon}/n-y]^{+} f(M_{\epsilon})-[g(x)+m_{\epsilon}/n-y]^{+} f(m_{\epsilon})\right\} \\
 &-c(x)n\int [g(x)+z/n-y]^{+} f'(z)dz
 \end{align*}
  so that 
 $$
 q_{n}(x,y)=\E[ \tilde q_{n}(x,y)]
 $$
 for $n\ge 1$. If $c$ is non-negative, continuous and  has polynomial growth, then the sequence $(q_{n})_{n\ge 1}$ matches the requirements of  Proposition \ref{prop : stability qn}. 
 
 We now let $\tau$ be an independent exponentially distributed random variable with density $\rho$ and cumulative distribution $1-\bar F$. Then, $Y^{t,x,n}$ defined as in  Proposition \ref{prop : stability qn} satisfies 
\begin{align*}
Y^{t,x,n}_{s}=&\E_{s}\left[e^{-rT}\frac{g(X^{t,x}_{T})}{\bar F(T-t)}{\bf 1}_{\{T-t\le \tau\}}+{\bf 1}_{\{T-t>\tau\}}\frac{e^{-r\tau}\tilde q_{n}(X^{t,x}_{t+\tau},e^{r\tau}Y^{t,x,n}_{t+\tau})}{{\rho(\tau)}}\right].
\end{align*}
This can be viewed as a branching based representation in which particles  die at an exponential time. When a particle die before $T$, we give it the (random)  mark  $\tilde q_{n}(X^{t,x}_{t+\tau},e^{r\tau}Y^{t,x,n}_{t+\tau})$. In terms of the representation of Section \ref{sec: loc poly approach}, this corresponds to $j_{0}=1,$ $l_{0}=0$, to replacing $a_{1,0}(x)\phi_{1}(y')$ by $\tilde q_{n}(x,y')$, and to not using a Picard iteration scheme.

 On a finite time grid $\pi\subset [0,T]$ containing $\{0,T\}$, it can be approximated by the sequence $v^{\pi}_{n}$ defined by $v^{\pi}_{n}(T,\cdot)=g$ and 
\begin{align}
v^{\pi}_{n}(t,x)=&\E\left[e^{-rT}\frac{g(X^{t,x}_{T})}{\bar F(T-t)}{\bf 1}_{\{T-t\le \tau\}}\right]\label{eq: algo random}\\
&+\E\left[{\bf 1}_{\{T-t>\tau\}}\frac{e^{-r\tau}\tilde q_{n}(X^{t,x}_{\phi^{\pi}_{t+\tau}},e^{r\tau}v^{\pi}_{n}(\phi^{\pi}_{t+\tau},X^{t,x}_{\phi^{\pi}_{t+\tau}}))}{{\rho(\tau)}}\right], \nonumber
 \end{align}
 where $\phi^{\pi}_{s}:=\inf\{s'\ge s: s'\in \pi\}$ for $s\le T$. Showing that $v^{\pi}_{n}(\phi^{\pi}_{t},x)$ converges point-wise to $Y^{t,x,n}_{t}$ as the modulus of $\pi$ vanishes can be done by working along the lines of \cite[Section 4.3]{baradel2016optimal} or \cite{fleming1989existence}. In view of Proposition \ref{prop : stability qn}, $v^{\pi}_{n}$ converges point-wise to $V$ as $|\pi|\to 0$ and $n\to \infty$. A similar analysis could be performed when considering a grid in space, which will be necessary in practice. 
 \vspace{2mm}
 
Then, \eqref{eq: algo random} provides a natural backward algorithm: given a space-time grid $\Pi:=(t_{i},x_{j})_{i,j}$,  \eqref{eq: algo random} can be used to compute $v^{\pi}_{n}(t_{i},x_{j})$ given the already computed values of $v^{\pi}_{n}$ at the later times in the grid, by replacing the expectation by a Monte-Carlo counterpart.

\begin{figure}[h]
\begin{center}
\includegraphics[height = 0.9\textheight, width=0.9\textwidth]{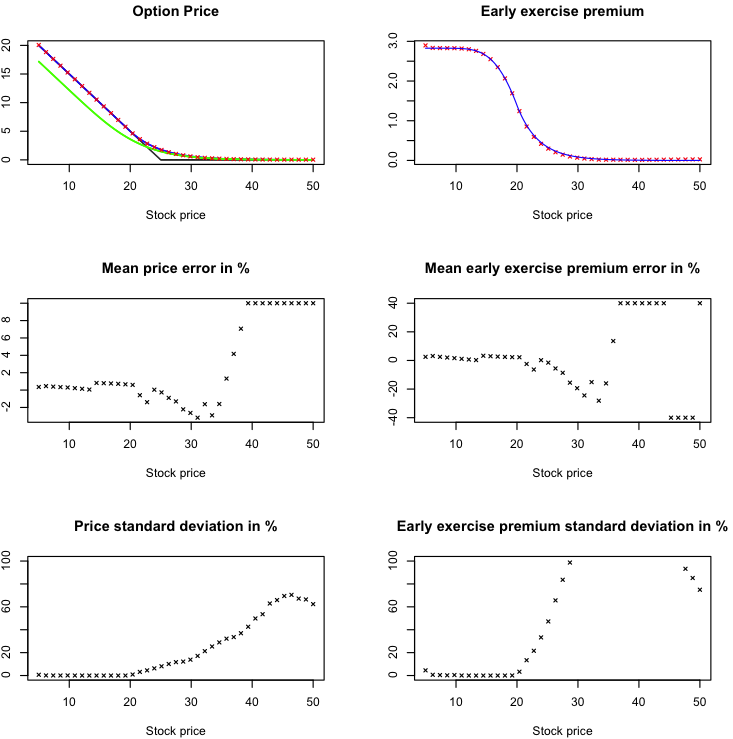}
\end{center}
\caption{Branching with indicator driver. Put option, 1.000 sample paths. Plain lines=true values, crosses=estimations.}
\label{figureput1000}
\end{figure}

\begin{figure}[h]
\begin{center}
\includegraphics[height =0.9 \textheight, width =0.9 \textwidth]{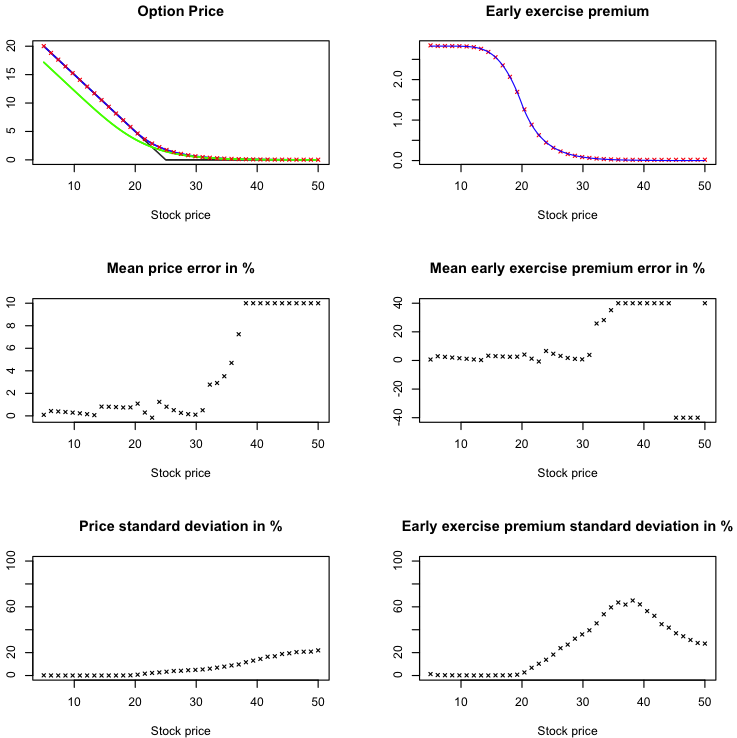}
\end{center}
\caption{Branching with indicator driver. Put option, 10.000 sample paths. Plain lines=true values, crosses=estimations.}
\label{figureput10000}
\end{figure}

\begin{figure}[h]
\begin{center}
\includegraphics[height = 0.9\textheight, width = 0.9\textwidth]{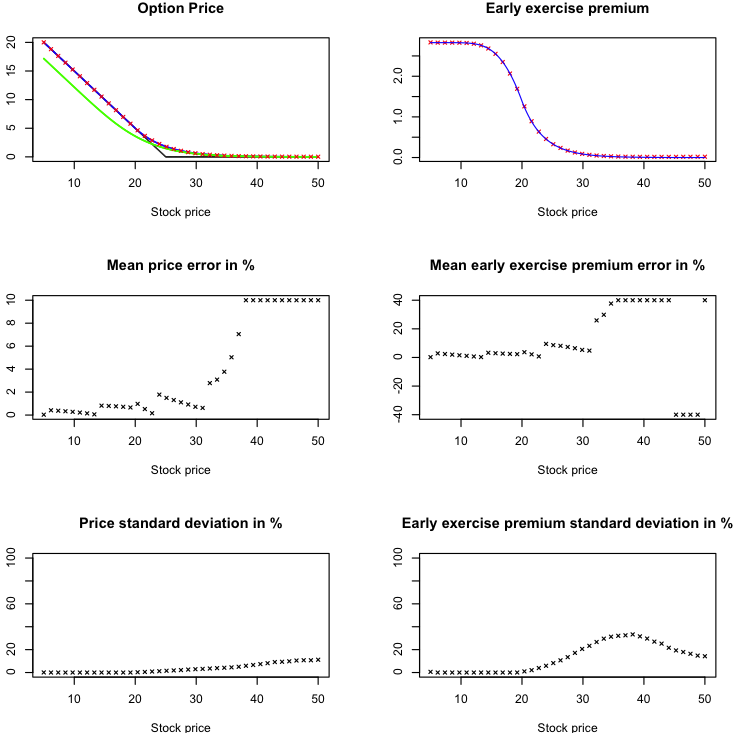}
\end{center}
\caption{Branching with indicator driver. Put option, 50.000 sample paths. Plain lines=true values, crosses=estimations.}
\label{figureput50000}
\end{figure}

Let us {now} consider {a {put} option pricing problem within the Black-Scholes model as in  the previous section. The interest rate is $6\%$, the volatility is $20\%$ and the strike is $25$}. {The partition $\pi$ of $[0,T]$ is uniform with  $100$ time steps. However, we update $v^{\pi}_{n}$ only every 10 time steps (and consider that it is constant in time in between). The fine grid $\pi$ is therefore only used to approximate $X^{t,x}_\tau$ by $X^{t,x}_{\phi^{\pi}_{\tau}}$ accurately.}
{We  use a $40$-points equidistant space-grid on the interval $[5, 50]$. The random variable ${\epsilon/n}$ is exponentially distributed, with mean equal to $10^{-100}$, while $\tau$ has mean $0.6$.  In Figures \ref{figureput1000}, \ref{figureput10000} and \ref{figureput50000}, we provide the estimated prices, the estimated early exercise premium  as well as the corresponding relative errors. The statistics are based on 50 independent trials. The reference values are computed with an implicit scheme for the associated pde, with regular grids of 500 points in space and 1.000 points in time (we also provide the European option price in the top-left graph, for comparison). The relative errors are capped to $10\%$ or $40\%$ for ease of readability. These graphs show that the numerical method is very efficient. The relative error for a stock price higher that 30/35 are not significant since it corresponds to option prices very close to $0$. For $10.000$ simulated paths, it takes 12 secondes for one estimation of the whole price curve with a R code running on a Macbook 2014, 2.5 GHz Intel Core i7, with 4 physical cores.}

\begin{figure}[h]
\begin{center}
\includegraphics[height = 0.9\textheight, width = 0.9\textwidth]{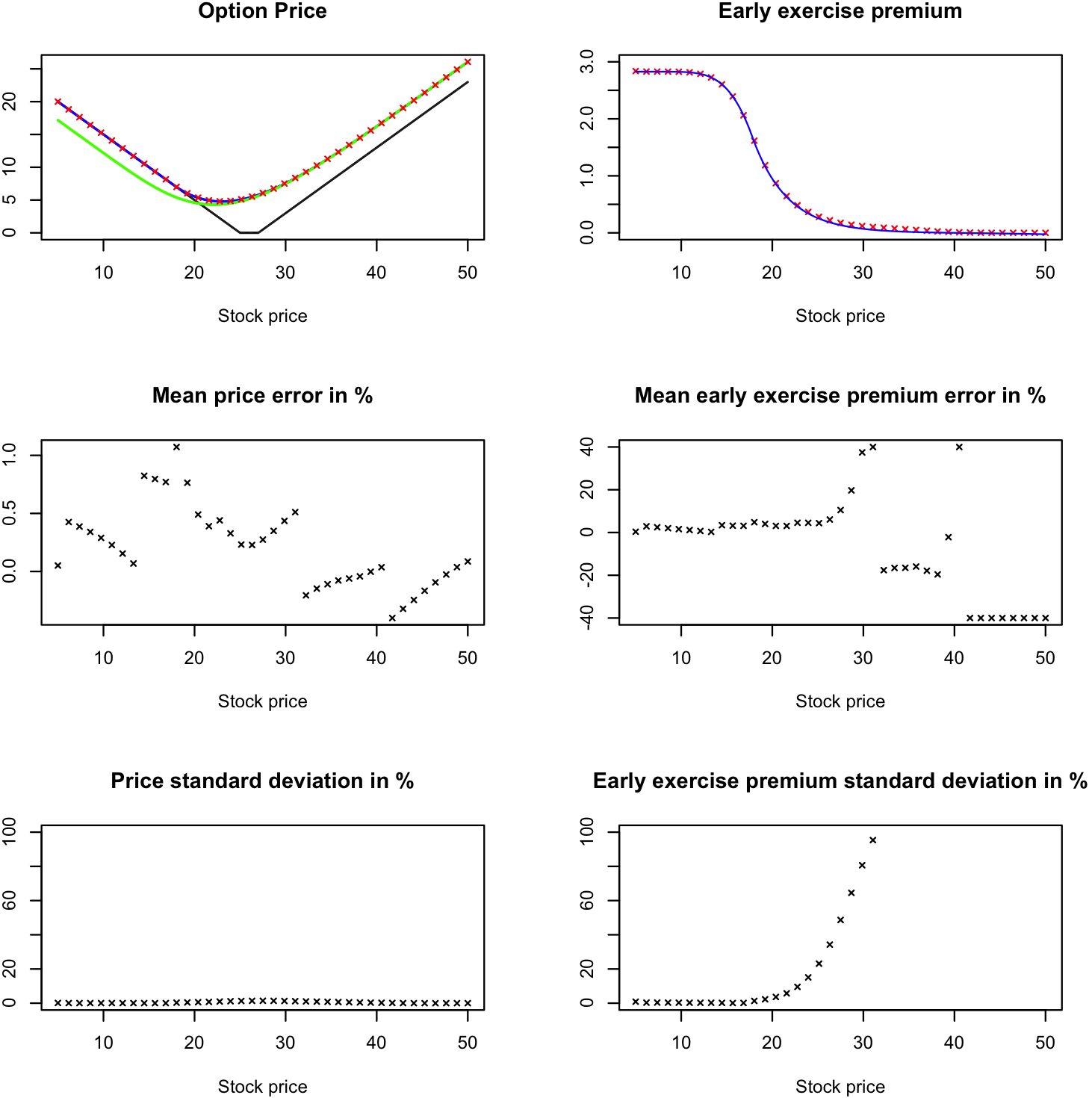}
\end{center}
\caption{Branching with indicator driver. Strangle option, 50.000 sample paths. Plain lines=true values, crosses=estimations.}
\label{figurestrangle50000}
\end{figure}

{We next consider a strangle with strikes {25} and 27, see Example \ref{exam: function c}.  The results obtained with 50.000 sample paths are displayed in Figures \ref{figurestrangle50000}.}

{Note that we do not use any variance reduction technique in these experiments. }
  \bibliographystyle{plain}

\end{document}